\def\be{\begin{equation}}
\def\ee{\end{equation}}
\def\bea{\begin{eqnarray}}
\def\eea{\end{eqnarray}}
\def\bean{\begin{mathletters}\begin{eqnarray}}
\def\eean{\end{eqnarray}\end{mathletters}}
\newcommand{\tbox}[1]{\mbox{\tiny #1}}
\def\text{\tbox}
\def\Es2{E_{0,{\rm sat}}^2}
\newcounter{eqletter}
\def\mathletters{%
\setcounter{eqletter}{0}%
\addtocounter{equation}{1}
\edef\curreqno{\arabic{equation}}
\edef\@currentlabel{\theequation}
\def\theequation{%
\addtocounter{eqletter}{1}\thechapter.\curreqno\alph{eqletter}%
}%
}
\newtheorem{theorem*}{Theorem}
\newtheorem{claim*}[theorem*]{Claim}
\newtheorem{corollary*}[theorem*]{Corollary}
\newtheorem{lemma*}[theorem*]{Lemma}
\title{Lower bounds on the obstacle number of graphs\footnote{The authors gratefully acknowledge support
from the Bernoulli Center at EPFL and from the Swiss National
Science Foundation, Grant No. 200021-125287/1. Research by the
second author was also supported by grants from NSF, BSF, and
OTKA.\newline
email: padmini.mvs@gmail.com, pach@cims.nyu.edu,
dom@cs.elte.hu}}
\author{Padmini Mukkamala}
\affil{Rutgers, The State University of New Jersey}
\author{J{\'{a}}nos Pach}
\affil{Ecole Polytechnique F\'ed\'erale de Lausanne}
\author{D\"om\"ot\"or P\'alv\"olgyi}
\affil{E\"otv\"os University Budapest}
\begin{document}

\maketitle

\begin{abstract}
Given a graph $G$, an {\em obstacle representation} of $G$ is a
set of points in the plane representing the vertices of $G$,
together with a set of connected obstacles such that two vertices
of $G$ are joined by an edge if and only if the corresponding
points can be connected by a segment which avoids all obstacles.
The {\em obstacle number} of $G$ is the minimum number of
obstacles in an obstacle representation of $G$. It is shown that
there are graphs on $n$ vertices with obstacle number at least
$\Omega({n}/{\log n})$.

\end{abstract}

\section{Introduction}

Consider a set $P$ of points in the plane and a set of closed
polygonal obstacles whose vertices together with the points in $P$
are in {\em general position}, that is, no {\em three} of them are
on a line. The corresponding {\em visibility graph} has $P$ as its
vertex set, two points $p,q\in P$ being connected by an edge if
and only if the segment $pq$ does not meet any of the obstacles.
Visibility graphs are extensively studied and used in
computational geometry, robot motion planning, computer vision,
sensor networks, etc.; see \cite{BKOS00}, \cite{G07}, \cite{OR97},
\cite{O99}, \cite{Ur00}.

Alpert, Koch, and Laison \cite{AKL09} introduced an interesting
new parameter of graphs, closely related to visibility graphs.
Given a graph $G$, we say that a set of points and a set of
polygonal obstacles as above constitute an {\em obstacle
representation} of $G$, if the corresponding visibility graph is
isomorphic to $G$. A representation with $h$ obstacles is also
called an $h$-obstacle representation. The smallest number of
obstacles in an obstacle representation of $G$ is called the {\em
obstacle number} of $G$ and is denoted by ${\rm obs}(G)$. Alpert
et al. \cite{AKL09} proved that there exist graphs with
arbitrarily large obstacle numbers.

Using tools from extremal graph theory, it was shown in \cite{PS}
that for any fixed $h$, the number of graphs with obstacle number
at most $h$ is $2^{o(n^2)}$. Notice that this immediately implies
the existence of graphs with arbitrarily large obstacle numbers.

In the present note, we establish some more precise estimates.

\begin{theorem*}\label{enumeration}
 (i) For any positive integer $h$, the number of graphs on $n$
(labeled) vertices that admit a representation with $h$ obstacles
is at most $$2^{O(hn log^2 n)}.$$
                  (ii) Moreover, the number of graphs on $n$ (labeled)
vertices that admit a representation with a set of obstacles
having a total of $s$ sides, is at most $$2^{O(n log n + s log
s)}.$$
\end{theorem*}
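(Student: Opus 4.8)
The plan is to bound, in each part, the number of distinct labeled visibility graphs by an encode‑and‑count argument: exhibit a small combinatorial \emph{certificate} from which the entire edge set can be reconstructed, and then count the certificates.

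I would prove part (ii) first, since it is the engine. The key point is that the visibility graph is completely determined by two pieces of purely combinatorial data: (a) the \emph{order type} of the $n+s$ points consisting of the $n$ graph‑vertices together with the $s$ obstacle vertices, and (b) the \emph{side structure}, i.e.\ the set of pairs of obstacle vertices joined by a side of an obstacle. Indeed, after discarding the (necessarily isolated) graph‑vertices that lie inside an obstacle, every remaining graph‑vertex lies in the free region, so a segment $pq$ between two graph‑vertices is an edge if and only if it crosses none of the obstacle sides; and whether $pq$ crosses a fixed side $ab$ is decided by the orientations of the triples inside $\{p,q,a,b\}$, hence by the order type. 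General position guarantees no such segment passes through an obstacle vertex, so ``meets the obstacle'' is equivalent to ``crosses a side.'' Given (a) and (b) the edge set is thus determined. To count: the number of order types of $N$ points in the plane is $2^{O(N\log N)}$ (Goodman--Pollack), so with $N=n+s$ this is $2^{O((n+s)\log(n+s))}=2^{O(n\log n+s\log s)}$, using $(n+s)\log(n+s)=O(\max(n,s)\log\max(n,s))$. The side structure is a $2$-regular graph on the $s$ obstacle vertices (a disjoint union of polygon cycles), so there are at most $s!=2^{O(s\log s)}$ choices. Multiplying gives the claimed $2^{O(n\log n+s\log s)}$.

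For part (i) the plan is to reduce to (ii) by controlling the total number of sides. I would first note we may assume $h\le\binom{n}{2}$ (otherwise one tiny obstacle per non-edge suffices), so that $\log s=O(\log n)$ for the side-counts that arise. The heart of the matter is the lemma that \emph{if $G$ has an $h$-obstacle representation, then it has one in which every obstacle is a polygon with $O(n\log n)$ sides}, giving a total of $s=O(hn\log n)$ sides; feeding this into (ii) yields $2^{O(n\log n+s\log s)}=2^{O(hn\log^2 n)}$. To establish the lemma I would fix the $n$ vertices and work inside the arrangement of the $\binom{n}{2}$ lines spanned by vertex pairs, in which the set of vertex-segments crossed by a region is a combinatorial invariant, and replace each connected obstacle $O$ by a low-complexity polygon crossing exactly the same vertex-segments. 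A helpful structural fact is that, $O$ being connected, the set of directions in which $O$ is seen from a fixed vertex $p$ is a single circular arc (the continuous image of the connected set $O$ under the direction map), so the \emph{angular} part of the blocking is cheap to encode; the difficulty is entirely \emph{radial}.

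The main obstacle is precisely this radial/non-convexity phenomenon. The angular arcs alone do not determine which vertices along an occluded direction are actually blocked, because a non-convex connected obstacle can meet a line $pq$ in several components, so $pq$ may be blocked while a shorter collinear segment is free; consequently one cannot simply convexify, as the convex hull blocks strictly more pairs. Moreover, the naive fix---threading a thin Steiner tree through one crossing point per blocked segment---uses $\Theta(n^2)$ sides, which is far too many. The technical step I expect to be hardest is therefore to show that the depth profile of a single connected obstacle along the $\binom{n}{2}$ vertex-directions can be reproduced by a polygon of only near-linear complexity, keeping the total side-count at $O(hn\log n)$ so that the estimate from (ii) applies.
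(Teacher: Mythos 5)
Your part (ii) is essentially the paper's own argument and is correct: both proofs reduce the count to the Goodman--Pollack bound $2^{O(N\log N)}$ on order types of $N\le n+s$ points. (The paper encodes the side structure implicitly, by listing each obstacle's vertices in cyclic order within the point sequence; you count it separately as a union of cycles, at cost $2^{O(s\log s)}$. Either way the bound $2^{O(n\log n+s\log s)}$ follows.)

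Part (i), however, has a genuine gap, and you have located it yourself: you never prove the key lemma that an $h$-obstacle representation can be converted into one in which every obstacle is a polygon with $O(n\log n)$ sides, and your final paragraph concedes that you do not know how to reproduce the ``radial depth profile'' of a non-convex obstacle with few sides. The paper's resolution is short and sidesteps the depth-profile problem entirely. Consider the straight-line drawing of the visibility graph $G$ itself on its $n$ representing points; every obstacle lies in a single face of this drawing, since it meets no edge. By the theorem of Arkin, Halperin, Kedem, Mitchell, and Naor, a single face in a drawing of a graph on $n$ vertices has complexity $O(n\log n)$ --- this is exactly the near-linear bound you were hoping to manufacture, but it comes for free from the single-face theorem rather than from any analysis of the obstacle. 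Now replace each obstacle $O_i$ by a slightly shrunken copy $F_i'$ of the face $F_i$ containing it; since $O_i$ is compact and lies in the open face, a sufficiently slight shrinking gives $O_i\subseteq F_i'\subset F_i$. Then $F_i'$ blocks no edge of $G$ (it avoids all edges, which are disjoint from the face), and it blocks every non-edge that $O_i$ blocked (by containment), so the visibility graph is unchanged. The conceptual point you were missing is that the replacement obstacle need not block the \emph{same} set of segments as $O_i$: it may block any superset of the non-edges blocked by $O_i$, as long as it blocks no edges. Your worry that convexification ``blocks strictly more pairs'' is fatal only when the extra blocked pairs include edges, which the face construction rules out by definition. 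With this lemma, $s=O(hn\log n)$, and part (ii) yields $2^{O(hn\log^2 n)}$ exactly as you intended; your observation that one may assume $h\le\binom{n}{2}$, so that $\log s=O(\log n)$, is the right way to finish (the paper leaves this point implicit).
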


In the above bounds, it makes no difference whether we count
labeled or unlabeled graphs, because the number of labeled graphs
is at most $n!=2^{O(n log n)}$  times the number of unlabeled
ones.

It follows from Theorem~\ref{enumeration} (i) that for every $n$,
there exists a graph $G$ on $n$ vertices with obstacle number
$${\rm obs}(G)\geq \Omega\left({n}/{\log^2n}\right).$$
Indeed, as long as $2^{O(hn\log^2n)}$ is smaller than
$2^{\Omega(n^2)}$, the total number of (labeled) graphs with $n$
vertices, we can find at least one graph on $n$ vertices with
obstacle number $h$.

Here we show the following slightly stronger bound.

\begin{theorem*}\label{concave}
For every $n$, there exists a graph $G$ on $n$ vertices with
obstacle number
$${\rm obs}(G)\geq \Omega\left({n}/{\log n}\right).$$
\end{theorem*}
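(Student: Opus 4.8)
The plan is to deduce the theorem from a sharpened form of Theorem~\ref{enumeration}(i): the number of graphs on $n$ labeled vertices admitting a representation with $h$ obstacles is at most $2^{O(hn\log n)}$, i.e.\ one saves a factor $\log n$ in the exponent. Granting this, the counting argument already sketched after Theorem~\ref{enumeration} finishes the proof: the total number of labeled graphs on $n$ vertices is $2^{\binom{n}{2}}=2^{\Omega(n^2)}$, so as soon as $C\,hn\log n<\binom{n}{2}$ — that is, whenever $h\le c\,n/\log n$ for a suitable small constant $c$ — the graphs realizable with $h$ obstacles cannot exhaust all graphs. Hence some $G$ on $n$ vertices has $\mathrm{obs}(G)>c\,n/\log n=\Omega(n/\log n)$.

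So everything reduces to the improved count. First I would fix the combinatorial type of the configuration of the $n$ points. By the Goodman--Pollack bound there are only $2^{O(n\log n)}$ order types, so I may condition on one of them; it then remains to bound, for a fixed point set $P$, the number of graphs that a set of $h$ obstacles can produce. A graph is determined by its set of non-edges, and a pair $pq$ is a non-edge precisely when the segment $pq$ is crossed by at least one obstacle; thus the non-edge set is the union $\bigcup_{i=1}^{h} B_i$, where $B_i\subseteq\binom{P}{2}$ is the set of segments crossing the single obstacle $O_i$. It therefore suffices to show that, for a \emph{fixed} point set, a single connected obstacle can realize at most $2^{O(n\log n)}$ distinct blocking sets $B$; multiplying over the $h$ obstacles and over the choice of order type then yields $2^{O(hn\log n)}$ in total.

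The heart of the matter — and the step I expect to be the main obstacle — is this per-obstacle estimate, which is where the named \emph{concave}/convex geometry enters and where the extra $\log n$ is recovered. The idea is that a single obstacle may be canonicalized: replace $O_i$ by a minimal (in particular convex) polygon crossing exactly the same segments of $\binom{P}{2}$, and argue that such a polygon may be taken to have only $O(n)$ sides, its boundary being pinned down by tangencies to $O(n)$ relevant points and segments. Equivalently, one shows that from each viewpoint $p\in P$ the set of points $q$ for which $pq$ is blocked by $O_i$ is an \emph{interval} in the rotational order of $P$ around $p$ (this is transparent when $P$ is in convex position and $O_i$ lies inside $\mathrm{conv}(P)$, the blocked partners of each vertex forming a contiguous arc). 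Each such interval is specified by $O(\log n)$ bits, and summing over the $n$ viewpoints encodes $B_i$ in $O(n\log n)$ bits, giving the desired $2^{O(n\log n)}$ bound on the number of blocking sets.

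The delicate points to be handled carefully are (i) that passing to the convex hull of an obstacle blocks \emph{more} segments and can therefore destroy edges, so the canonicalization must produce a polygon blocking \emph{exactly} $B_i$ rather than a superset, and (ii) that the clean interval structure is immediate only in convex position, so for the general counting one must either reduce to a convex-position configuration or replace the interval claim by a direct $O(n)$ bound on the combinatorial complexity of a minimal obstacle. Once the per-obstacle complexity is controlled at $O(n)$ sides, I could alternatively invoke Theorem~\ref{enumeration}(ii) with total side count $s=O(hn)$: since $h\le n$ gives $s\log s=O(hn\log n)$, that bound reads $2^{O(n\log n+s\log s)}=2^{O(hn\log n)}$, matching the estimate above and completing the argument.
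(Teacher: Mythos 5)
Your strategy --- sharpening Theorem~\ref{enumeration}(i) to $2^{O(hn\log n)}$ and then pigeonholing --- is genuinely different from the paper's, but the sharpening is exactly where the proof has to live, and the two ``delicate points'' you flag are not technicalities: neither of your proposed resolutions works, and as written the argument has a real gap. (a) Convex canonicalization is impossible in general. Take $a=(0,1)$, $b=(0,0)$, $c=(1,0)$, $d=(1,1)$; a thin connected obstacle that pokes through the open segment $ab$ from the left, runs around the \emph{outside} of the square, and pokes through $cd$ from the right has blocking set exactly $\{ab,cd\}$. But any convex set meeting both $ab$ and $cd$ contains a straight segment from a point of $ab$ to a point of $cd$, and such a segment must cross $ac\cup bd$, because the two diagonals separate the left triangle of the square from the right one; so no convex obstacle blocks exactly $\{ab,cd\}$. (b) The interval claim fails for points in general position: whether $pq$ is blocked depends not only on the direction of $q$ from $p$ but on the \emph{depth} at which the obstacle crosses the ray $pq$, so a far point can be blocked while a nearer point in an intermediate direction stays visible; the blocked partners of $p$ need not be contiguous in the rotational order. (c) Your fallback needs every obstacle to be replaceable by one with $O(n)$ sides realizing the same graph; the only bound actually available is the $O(n\log n)$ face-complexity bound (Theorem~B), which is tight by Matou\v sek and Valtr \cite{MV97}, and plugging $s=O(hn\log n)$ into Theorem~\ref{enumeration}(ii) merely reproduces the $2^{O(hn\log^2 n)}$ of part (i), i.e., only the weaker bound $\Omega(n/\log^2 n)$ already noted in the introduction. (There is also a smaller unaddressed issue: a bound on the number of blocking sets for each \emph{fixed} point set does not immediately bound the number of graphs arising from \emph{all} point sets with a given order type; the paper avoids this by folding the obstacle vertices into the order type.)

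The paper proves the theorem without improving the enumeration bound at all, and this is the idea your proposal misses. It takes a \emph{random} graph $G$ on $n$ vertices (edges chosen independently with probability $1/2$) and uses a decomposition lemma: if $G$ has a representation with fewer than $n/(2k)$ obstacles, then, slicing the point set by vertical lines into groups of $k$ consecutive vertices, more than $\lfloor n/k\rfloor - n/(2k)\ge\lfloor n/(2k)\rfloor$ of the groups contain no obstacle inside their convex hull and hence induce subgraphs with obstacle number at most one. The existing bound (i), with its $\log^2$ loss, is then applied only at scale $k=\Theta(\log n)$: the probability that a fixed $k$-set induces a subgraph of obstacle number at most one is
$$2^{O(k\log^2k)-\binom{k}{2}},$$
and since $\log^2 k=\log^2\log n=o(k)$, the loss is negligible there. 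A union bound over the at most ${n\choose k}^{\lfloor n/(2k)\rfloor}$ choices of disjoint $k$-sets shows that for $k=\lfloor 5\log n\rfloor$ the probability that ${\rm obs}(G)<n/(2k)$ tends to zero, so almost every graph has obstacle number greater than $n/(10\log n)$. If you want to keep your route instead, you would need a genuinely new proof of the $2^{O(hn\log n)}$ enumeration bound, which is a substantially harder problem than the counting argument that follows from it.
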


This comes close to answering the question in \cite{AKL09} whether
there exist graphs with $n$ vertices and obstacle number at least
$n$. However, we have no upper bound on the maximum obstacle
number of $n$-vertex graphs, better than $O(n^2)$.

Our next theorem answers another question from \cite{AKL09}.

\begin{theorem*}\label{exactly}
For every $h$, there exists a graph with obstacle number exactly
$h$.
\end{theorem*}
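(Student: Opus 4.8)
The plan is to argue by a discrete intermediate value principle. Since obstacle number is an integer‑valued graph parameter that equals $0$ on some graphs and is unbounded (by Theorem~\ref{concave}, there are graphs with obstacle number $\ge h$ for every $h$), it suffices to connect a graph of obstacle number $0$ to one of obstacle number at least $h$ by a chain of graphs along which the parameter never jumps \emph{up} by more than $1$ in a single step. Concretely, I would fix $n$ large enough that Theorem~\ref{concave} supplies a graph $G$ on $n$ vertices with ${\rm obs}(G)\ge h$, and consider the chain $K_n=F_0,F_1,\dots,F_t=G$ obtained by deleting the edges of $K_n\setminus G$ one at a time (so each $F_{i+1}=F_i-e_i$ and all $F_i$ live on the same vertex set). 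Here ${\rm obs}(K_n)=0$, realized by placing the $n$ points in convex position with no obstacles.

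The technical heart is the following one‑sided Lemma: \emph{deleting a single edge raises the obstacle number by at most one}, i.e. ${\rm obs}(F_i-e)\le {\rm obs}(F_i)+1$. I would prove this by taking an optimal obstacle representation of $F_i$ with $m={\rm obs}(F_i)$ obstacles. If $e=uv$, then $u,v$ are mutually visible, so the open segment $uv$ meets no obstacle. Choose a point $x$ in the relative interior of $uv$ that lies on no other segment $p'q'$ determined by the representing points; since there are only finitely many such segments and each meets $uv$ in at most one point, and since $uv$ is at positive distance from the (closed) obstacles, a small enough polygonal obstacle centered at $x$ (with generically perturbed vertices, to keep general position) blocks exactly the pair $u,v$ and no other pair. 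This realizes $F_i-e$ with $m+1$ obstacles, proving the Lemma.

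With the Lemma in hand the argument closes cleanly. Writing $a_i={\rm obs}(F_i)$, we have $a_0=0$, $a_t\ge h$, and $a_{i+1}\le a_i+1$ for all $i$. Let $i^\star$ be the last index with $a_{i^\star}<h$ (it exists because $a_0<h$, and $i^\star<t$ because $a_t\ge h$). Then $a_{i^\star}\le h-1$ while $a_{i^\star+1}\ge h$, and the up‑step bound gives $a_{i^\star+1}\le a_{i^\star}+1\le h$; hence $a_{i^\star+1}=h$ exactly, and $F_{i^\star+1}$ is the desired graph. (The case $h=0$ is handled directly by $K_n$.)

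The step I expect to be the genuine obstacle is getting the \emph{direction} of the sweep right, rather than the Lemma itself. The symmetric statement for edge \emph{addition} is false: forcing two blocked points to see each other may require dismantling an obstacle that blocks many other pairs, so adding an edge can drop the obstacle number by an arbitrary amount and is not bounded by $\pm1$. This is exactly why the chain must run \emph{from} the complete graph (obstacle number $0$) \emph{down} to a high‑obstacle graph, exploiting only the easy direction ${\rm obs}(G-e)\le{\rm obs}(G)+1$; the one‑sided bound is precisely what the discrete intermediate value step needs. Once this asymmetry is recognized, the only remaining care is the routine general‑position bookkeeping for the inserted tiny obstacle.
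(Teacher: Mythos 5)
Your proposal is correct and is essentially identical to the paper's own proof: both run a chain of single-edge deletions from $K_n$ (obstacle number $0$) down to a graph with obstacle number at least $h$, use the same key observation that ${\rm obs}(G'-e)\le{\rm obs}(G')+1$ (realized by inserting a tiny obstacle blocking only the deleted edge), and conclude by the discrete intermediate value argument. Your write-up just makes the general-position bookkeeping and the one-sidedness of the bound more explicit than the paper does.
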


A special instance of the obstacle problem has received a lot of
attention, due to its connection to the Szemer\'edi-Trotter
theorem on incidences between points and lines~\cite{ST83a},
\cite{ST83b}, and other classical problems in incidence
geometry~\cite{PA95}. This is to decide whether the obstacle
number of ${\overline{K}}_n$, the empty graph on $n$ vertices, is
$O(n)$ if the obstacles must be {\em points}. The best known upper
bound is $n2^{O(\sqrt{\log n})}$ is due to Pach \cite{Pach03}; see
also Dumitrescu et al.~\cite{DPT09}, Matou\v sek~\cite{M09}, and
Aloupis et al. \cite{A+10conf}.

It is an interesting open problem to decide whether the obstacle
number of planar graphs can be bounded from above by a constant. For
outerplanar graphs, this has been verified by Fulek, Saeedi, and
Sar{\i}\"oz~\cite{FSS}, who proved that every outerplanar graph has
obstacle number at most $5$.

Theorem $i$ is proved in Section $i+1,\; 1\le i\le 3$.

\section{Proof of Theorem \ref{enumeration}}\label{sec:convex}
We will prove the theorem by a simple counting method.
Before turning to the proof, we
introduce some terminology. Given any placement (embedding) of the
vertices of $G$ in general position in the plane, a {\em straight-line drawing} or, in short, a {\em drawing}
of $G$ consists of the image of the embedding and the set of {\em
open line segments} connecting all pairs of points that correspond to the edges of $G$. If there is no danger of confusion, we make no
notational difference between the vertices of $G$ and the
corresponding points, and between the pairs $uv$ and the
corresponding open segments. The complement of the set of all
points that correspond to a vertex or belong to at least one edge
of $G$ falls into connected components. These components are
called the {\em faces} of the drawing. Notice that if $G$ has an
obstacle representation with a particular placement of its vertex
set, then

(1) each obstacle must lie entirely in one face of the drawing,
and

(2) each non-edge of $G$ must be blocked by at least one of the
obstacles.\\

We start by proving a result about the {\em convex obstacle number}
(a special case of Theorem \ref{concave}), as the arguments are
simpler here. Then we tackle Theorem \ref{enumeration} using similar methods.

Following Alpert et al., we define the {\em convex
obstacle number} ${\rm obs}_c(G)$  of a
graph $G$ as the minimal number of obstacles in an obstacle
representation of $G$, in which each obstacle is convex.

\begin{claim*}\label{convex}
For every $n$, there exists a graph $G$ on $n$ vertices with
convex obstacle number
$${\rm obs}_c(G)\geq \Omega\left({n}/{\log n}\right).$$
\end{claim*}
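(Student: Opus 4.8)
The plan is to prove the lower bound on the convex obstacle number by a counting argument, mirroring the structure already announced for Theorem~\ref{enumeration}. The strategy is to show that graphs admitting a convex obstacle representation with few obstacles are rare: if we can encode every such graph with a short bit-string, then a counting (pigeonhole) comparison against the total number of graphs on $n$ vertices forces the existence of a graph requiring many obstacles. Concretely, I would let $f(n,h)$ denote the number of labeled graphs on $n$ vertices with ${\rm obs}_c(G)\le h$, and aim to prove a bound of the form $f(n,h)\le 2^{O((n+h)\log n)}$ or similar; then, since the total number of labeled graphs on $n$ vertices is $2^{\binom{n}{2}}=2^{\Omega(n^2)}$, choosing $h$ of order $n/\log n$ would make $f(n,h)$ strictly smaller than $2^{\Omega(n^2)}$, yielding a graph with ${\rm obs}_c(G)=\Omega(n/\log n)$.

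The heart of the matter is the encoding. I would fix an abstract placement of the $n$ vertices and argue combinatorially rather than geometrically. The key observation, exploiting convexity, is that a single convex obstacle interacts with the drawing in a very restricted way: the set of non-edges (open segments) blocked by one convex obstacle should form a combinatorially simple family. First I would establish that each convex obstacle can only block segments whose endpoints it ``sees'' in a consistent rotational order, so that the blocked pairs attributable to a given obstacle are determined by a bounded amount of information — essentially a cyclic interval structure on the vertices. The goal is to show that the non-edges blocked by one convex obstacle can be described by $O(\log n)$ bits once the vertex placement's order type is fixed, so that $h$ obstacles contribute only $O(h\log n)$ bits. Combined with the number of distinct order types of $n$ points, which is $2^{O(n\log n)}$, this gives the desired total bound.

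The main obstacle I anticipate is precisely the combinatorial-geometry lemma in the previous paragraph: controlling how many qualitatively different sets of non-edges a single convex obstacle can block. It is not a priori clear that this is only polynomially or quasi-polynomially many; a convex region could in principle block an intricate pattern of segments. I would attack this by using the convexity to show that the edges blocked versus not blocked are separated in a structured way — for instance, that the obstacle partitions the relevant pairs according to whether the segment crosses a bounded-complexity convex boundary, and that the ``combinatorial footprint'' of a convex obstacle within a fixed face is governed by the two tangent lines / supporting structure it presents to the vertex set. Making this counting rigorous, so that each obstacle's contribution is genuinely $O(\log n)$ bits, is the crux; once it is in hand, the pigeonhole step is routine.

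Finally, I would assemble the pieces: multiply the number of order types by the per-obstacle encoding raised to the $h$, take logarithms, compare with $\binom{n}{2}$, and solve for the threshold on $h$. I expect the clean statement to be that $f(n,h)\le 2^{O(n\log n + h\log n)}$, whence $h=\Omega(n/\log n)$ follows. It is worth noting that this convex case is genuinely easier than Theorem~\ref{concave}, since convexity is exactly what keeps each obstacle's blocking pattern simple; the transition to general (possibly non-convex) obstacles in the full theorem will require replacing the per-obstacle $O(\log n)$ bound by something weaker, which is why the convex case is presented first as a warm-up.
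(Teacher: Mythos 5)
Your high-level framework --- encode every graph admitting a representation with $h$ convex obstacles by a short string, count the strings, and compare against the $2^{\binom{n}{2}}$ labeled graphs --- is exactly the strategy of the paper's proof, and your final pigeonhole step is sound. The genuine gap is the lemma you yourself single out as the crux: that once the order type of the vertex placement is fixed, the set of non-edges blocked by one convex obstacle is determined by $O(\log n)$ further bits. That lemma is false. Let $m=n/2$, let $u_j=(\cos(2\pi j/m),\sin(2\pi j/m))$, and let the vertex set consist of the $2m=n$ endpoints of the chords $c_1,\dots,c_m$ of the unit circle, where $c_j$ lies on the line $\{x:\langle x,u_j\rangle=1/2\}$. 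For each $S\subseteq\{1,\dots,m\}$ consider the single convex obstacle
$$C_S=\mathrm{conv}\bigl(\{0\}\cup\{(1/2+\delta)\,u_j : j\in S\}\bigr),\qquad \delta=1/m^{3}.$$
If $j\in S$, then $C_S$ contains the point $(1/2)u_j$, which is the midpoint of the chord $c_j$, so the edge $c_j$ is blocked; if $j\notin S$, then $\max_{x\in C_S}\langle x,u_j\rangle\le(1/2+\delta)\cos(2\pi/m)<1/2$, so $c_j$ is not blocked. Hence the $2^{n/2}$ choices of $S$ produce $2^{n/2}$ distinct visibility graphs using one convex obstacle on one fixed point set: a single convex obstacle carries $\Omega(n)$ bits of blocking information, not $O(\log n)$, and no encoding of the kind you propose can exist. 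A warning sign you could have caught: your target bound $2^{O(n\log n+h\log n)}$ would yield ${\rm obs}_c(G)\geq\Omega(n^2/\log n)$ for almost all graphs, essentially matching the trivial upper bound $\binom{n}{2}$; when a warm-up lemma would prove vastly more than the theorem it serves, it deserves suspicion.

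The repair --- and this is what the paper actually does --- is to accept a per-obstacle cost of $O(n\log n)$ bits, which is still small enough. For each convex obstacle $O_i$, rotate an oriented tangent line clockwise around its boundary and record the cyclic sequence in which it meets the $n$ vertices, labeling each vertex $+$ or $-$ according to the side of the line on which it is met. This sequence of $2n$ signed symbols determines exactly which pairs of vertices are visible in the presence of $O_i$ alone, so the $h$ sequences together determine the graph. There are at most $(2n)!$ sequences per obstacle, hence at most $((2n)!)^h<(2n)^{2hn}=2^{O(hn\log n)}$ graphs with convex obstacle number at most $h$; comparing with $2^{\binom{n}{2}}$ gives a graph with ${\rm obs}_c(G)\geq\Omega(n/\log n)$. (Note that the paper's argument for this Claim does not even need order types; those enter only in Theorem~\ref{enumeration}.) Your intuition that convexity keeps the blocking pattern simple is correct only for obstacles of bounded description size, such as segments, where adding the $O(1)$ obstacle vertices to the order type really does cost $O(\log n)$ bits per obstacle; that is precisely Theorem~\ref{enumeration}~(ii) and Corollary~\ref{segment}. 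A general convex obstacle may have arbitrarily many sides, and, as the construction above shows, it can exploit all of them.
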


The idea is to find a short encoding of the obstacle
representations of graphs, and to use this to give an upper bound
on the number of graphs with low obstacle number. The proof uses
the concept of order types. Two sets of points, $P_1$ and $P_2$,
in general position in the plane are said to have the same {\em
order type} if there is a one to one correspondence between them
with the property that the orientation of any triple in $P_1$ is
the same as the orientation of the corresponding triple in $P_2$.
Counting the number of different order types is a classical task.

\medskip
\noindent {\bf Theorem A.} [Goodman, Pollack~\cite{GP86}] {\em
The number of different order types of $n$ points in general
position in the plane is $2^{O(n\log n)}$. }\medskip

\noindent Observe that asymptotically the same upper bound holds for the number
of different order types of $n$ {\em labeled} points, because the
number of different permutations of $n$ points is $n!=2^{O(n\log
n)}$.

\begin{proof}[Proof of Claim~\ref{convex}]
We will give an upper bound for the number of graphs that admit a
representation with at most $h$ convex obstacles. Let us fix such
a graph $G$, together with a representation. Let $V$ be the set of
points representing the vertices, and let $O_1,\ldots, O_h$ be the
convex obstacles. For any obstacle $O_i$, rotate an oriented
tangent line $\ell$ along its boundary in the clockwise direction.
We can assume without loss of generality that $\ell$ never passes
through two points of $V$. Let us record the sequence of points
met by $\ell$. If $v\in V$ is met at the right side of $\ell$, we
add the symbol $v^+$ to the sequence, otherwise we add $v^-$. (See
Figure 1.)

\begin{figure*}[ht]
\label{fig:convex}
{\centering
\subfigure[Empty]{
\begin{tikzpicture}[scale=0.35]
\filldraw [blue!80!black!20!white] (0,0) -- (1,1) -- (2.5,1) -- (3.5,0) -- (2.5,-1) -- (1,-1) -- (0,0) -- cycle;
\node [fill=black,circle,inner sep=1pt,label=left:$1$] (1) at (-1,0) {}; 
\node [fill=black,circle,inner sep=1pt,label=left:$2$] (2) at (2,3) {}; 
\node [fill=black,circle,inner sep=1pt,label=left:$3$] (3) at (3,-2.5) {}; 
\draw [<-,red] (0,4) -- (0,-3);
\draw [->,green] (-0.5,4) arc (145:35:22pt);
\end{tikzpicture}
}
\qquad
\subfigure[$2^+$]{
\begin{tikzpicture}[scale=0.35]
\filldraw [blue!80!black!20!white] (0,0) -- (1,1) -- (2.5,1) -- (3.5,0) -- (2.5,-1) -- (1,-1) -- (0,0) -- cycle;
\node [fill=black,circle,inner sep=1pt,label=left:$1$] (1) at (-1,0) {}; 
\node [fill=red,circle,inner sep=1pt,label=left:$2$] (2) at (2,3) {}; 
\node [fill=black,circle,inner sep=1pt,label=left:$3$] (3) at (3,-2.5) {}; 
\draw [->,red] (-2,-3) -- (2.5,3.75);
\end{tikzpicture}
}
\qquad
\subfigure[$2^+1^-$]{
\begin{tikzpicture}[scale=0.35]
\filldraw [blue!80!black!20!white] (0,0) -- (1,1) -- (2.5,1) -- (3.5,0) -- (2.5,-1) -- (1,-1) -- (0,0) -- cycle;
\draw [->,red] (-1.75,-0.375) -- (3.75,2.375);
\node [fill=red,circle,inner sep=1pt,label=left:$1$] (1) at (-1,0) {}; 
\node [fill=black,circle,inner sep=1pt,label=left:$2$] (2) at (2,3) {}; 
\node [fill=black,circle,inner sep=1pt,label=left:$3$] (3) at (3,-2.5) {}; 
\end{tikzpicture}
}
\qquad
\subfigure[$2^+1^-2^-$]{
\begin{tikzpicture}[scale=0.35]
\filldraw [blue!80!black!20!white] (0,0) -- (1,1) -- (2.5,1) -- (3.5,0) -- (2.5,-1) -- (1,-1) -- (0,0) -- cycle;
\node [fill=black,circle,inner sep=1pt,label=left:$1$] (1) at (-1,0) {}; 
\node [fill=red,circle,inner sep=1pt,label=left:$2$] (2) at (2,3) {}; 
\node [fill=black,circle,inner sep=1pt,label=left:$3$] (3) at (3,-2.5) {}; 
\draw [->,red] (1.5,4) -- (5,-3);
\end{tikzpicture}
}
\qquad
\subfigure[$2^+1^-2^-3^+$]{
\begin{tikzpicture}[scale=0.35]
\filldraw [blue!80!black!20!white] (0,0) -- (1,1) -- (2.5,1) -- (3.5,0) -- (2.5,-1) -- (1,-1) -- (0,0) -- cycle;
\node [fill=black,circle,inner sep=1pt,label=left:$1$] (1) at (-1,0) {}; 
\node [fill=black,circle,inner sep=1pt,label=left:$2$] (2) at (2,3) {}; 
\node [fill=red,circle,inner sep=1pt,label=left:$3$] (3) at (3,-2.5) {}; 
\draw [<-,red] (2.75,-3.75) -- (4.25,3.75);
\end{tikzpicture}
}
\qquad
\subfigure[$2^+1^-2^-3^+1^+$]{
\begin{tikzpicture}[scale=0.35]
\filldraw [blue!80!black!20!white] (0,0) -- (1,1) -- (2.5,1) -- (3.5,0) -- (2.5,-1) -- (1,-1) -- (0,0) -- cycle;
\node [fill=red,circle,inner sep=1pt,label=left:$1$] (1) at (-1,0) {}; 
\node [fill=black,circle,inner sep=1pt,label=left:$2$] (2) at (2,3) {}; 
\node [fill=black,circle,inner sep=1pt,label=left:$3$] (3) at (3,-2.5) {}; 
\draw [<-,red] (-2,0.5) -- (4.5,-2.75);
\end{tikzpicture}
}
\qquad
\subfigure[$2^+1^-2^-3^+1^+3^-$]{
\begin{tikzpicture}[scale=0.35]
\filldraw [blue!80!black!20!white] (0,0) -- (1,1) -- (2.5,1) -- (3.5,0) -- (2.5,-1) -- (1,-1) -- (0,0) -- cycle;
\node [fill=black,circle,inner sep=1pt,label=left:$1$] (1) at (-1,0) {}; 
\node [fill=black,circle,inner sep=1pt,label=left:$2$] (2) at (2,3) {}; 
\node [fill=red,circle,inner sep=1pt,label=left:$3$] (3) at (3,-2.5) {}; 
\draw [<-,red] (-1.5,0.875) -- (5,-4);
\end{tikzpicture}
}
\qquad
\subfigure[$2^+1^-2^-3^+1^+3^-$]{
\begin{tikzpicture}[scale=0.35]
\filldraw [blue!80!black!20!white] (0,0) -- (1,1) -- (2.5,1) -- (3.5,0) -- (2.5,-1) -- (1,-1) -- (0,0) -- cycle;
\node [fill=black,circle,inner sep=1pt,label=left:$1$] (1) at (-1,0) {}; 
\node [fill=black,circle,inner sep=1pt,label=left:$2$] (2) at (2,3) {}; 
\node [fill=black,circle,inner sep=1pt,label=left:$3$] (3) at (3,-2.5) {}; 
\draw (3,-2.5) -- (-1,0) -- (2,3);
\draw [dashed,red](3,-2.5) -- (2,3);
\node (4) at (5,0) {};
\end{tikzpicture}
}
\caption{Parts (a) to (g) show the construction of the sequence and (h) shows the visibilities. The arrow on the tangent line indicates the direction from the point of tangency in which we assign $+$ as a label to the vertex. The additional arrow in (a) indicates that the tangent line is rotated clockwise around the obstacle.}
} 
\end{figure*}
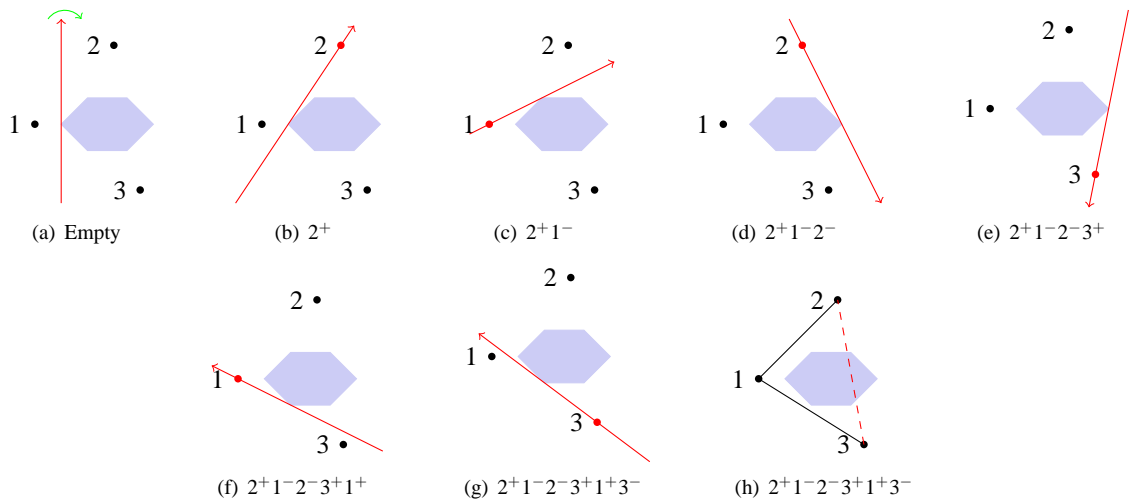

When $\ell$ returns to its initial position, we stop. The
resulting sequence consists of $2n$ characters. From this
sequence, it is easy to reconstruct which pairs of vertices are
visible in the presence of the single obstacle $O_i$. Hence,
knowing these sequences for every obstacle $O_i$, completely
determines the visibility graph $G$. The number of distinct
sequences assigned to a single obstacle is at most $(2n)!$, so the
number of graphs with convex obstacle number at most $h$ cannot
exceed $((2n)!)^h/h!<(2n)^{2hn}$. As long as this number is
smaller than $2^{n\choose 2}$, there is a graph with convex
obstacle number larger than $h$.
\end{proof}

To prove Theorem \ref{enumeration}, we will need one more result.
Given a drawing of a graph, the \emph{complexity} of a face is the number of line-segment sides bordering it. The following result was
proved by Arkin, Halperin, Kedem, Mitchell, and Naor (see Matou\v
sek, Valtr~\cite{MV97} for its sharpness).

\medskip
\noindent {\bf Theorem B.} [Arkin et al.~\cite{AHK95}] {\em
The complexity of a single face in a drawing of a graph with $n$
vertices is at most $O(n\log n)$. }
\medskip

\noindent Note that this bound does not depend on the number of
edges of the graph.

\begin{proof}[Proof of Theorem~\ref{enumeration}] First we show how to reduce part (i) of the theorem to part (ii). For each graph $G$ with $n$ vertices that admits a representation with at most $h$ obstacles, fix such a representation. Consider the visibility graph $G$ of the vertices in this representation. As explained at the beginning of this section, each obstacle belongs to a single face in this drawing. In view of Theorem~B, the complexity of every face is $O(n\log n)$. Replacing each obstacle by a slightly shrunken copy of the face containing it, we can achieve that every obstacle {\em is} a polygonal region with $O(n\log n)$ sides.

Now we prove part (ii). Notice that the order type of the sequence
$S$ starting with the vertices of $G$, followed by the vertices of
the obstacles (listed one by one, in cyclic order, and properly
separated from one another), completely determines $G$. That is, we
have a sequence of length $N$ with $N\le n + s$. According to
Theorem~A (and the comment following it), the number of different
order types with this many points is at most
$$2^{O(N\log N)}<2^{c(n+s)\log (n+s)},$$
for a suitable constant $c>0$. This is a very generous upper
bound: most of the above sequences do not correspond to any
visibility  graph $G$.
\end{proof}

Following Alpert et al., we define the {\em segment obstacle number} ${\rm obs}_s(G)$ of a graph $G$ as the minimal number of obstacles in an obstacle representation of $G$, in which each obstacle is a
{\em (straight-line) segment}. If we only allow segment obstacles, we have $s=2n$, and thus Theorem \ref{enumeration} (ii) implies the following bound.

\begin{corollary*}\label{segment}
For every $n$, there exists a graph $G$ on $n$ vertices with
segment obstacle number
$${\rm obs}_s(G)\geq \Omega\left({n^2}/{\log n}\right).$$
\end{corollary*}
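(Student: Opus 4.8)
The plan is to obtain this as a direct corollary of Theorem~\ref{enumeration}(ii), via the same counting scheme used for the convex case in Claim~\ref{convex}, now exploiting that a straight segment is an obstacle of \emph{constant} complexity. The first step is the translation from the number of obstacles to the number of sides: if ${\rm obs}_s(G)\le h$, then $G$ admits a representation by $h$ segments, and since a segment has two endpoints, the total number of obstacle sides (equivalently, obstacle vertices) entering the bound of Theorem~\ref{enumeration}(ii) is $s=2h$. Thus every graph with ${\rm obs}_s(G)\le h$ is among those counted by Theorem~\ref{enumeration}(ii) for this value of $s$.

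Substituting $s=2h$ into that theorem, the number of labeled graphs on $n$ vertices with ${\rm obs}_s(G)\le h$ is at most
$$2^{O(n\log n + 2h\log(2h))}=2^{O(n\log n + h\log h)}.$$
I would then compare this with the total number $2^{\binom{n}{2}}=2^{\Omega(n^2)}$ of labeled graphs on $n$ vertices: whenever the former quantity is strictly smaller than the latter, some graph on $n$ vertices must satisfy ${\rm obs}_s(G)>h$. It remains only to determine how large $h$ may be taken while preserving this inequality.

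To locate the threshold, set $h=c\,n^2/\log n$ for a small constant $c>0$. Then $\log h=2\log n-\log\log n+O(1)=O(\log n)$, so $h\log h=O(c\,n^2)$, while $n\log n=o(n^2)$; for $c$ small enough the exponent $O(n\log n+h\log h)$ is strictly below $\binom{n}{2}$ for all sufficiently large $n$. Hence a graph with ${\rm obs}_s(G)>h=\Omega(n^2/\log n)$ must exist, which is the claimed bound.

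The whole argument is routine once Theorem~\ref{enumeration}(ii) is in hand; the only point needing a moment's care is the simplification $\log s=O(\log n)$ (equivalently $\log h=O(\log n)$) inside the exponent, but this holds automatically at the tested threshold, where $h\le n^2$, so no separate a priori upper bound on ${\rm obs}_s$ is required. The genuine content lies, as before, in Theorem~\ref{enumeration}(ii) itself. The extra factor of $n$ relative to the general bound $\Omega(n/\log n)$ of Theorem~\ref{concave} is purchased entirely by the fact that a segment obstacle carries only $O(1)$ sides, rather than the $O(n\log n)$ sides that a worst-case polygonal obstacle (Theorem~B) may force.
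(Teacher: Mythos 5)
Your proposal is correct and is essentially the paper's own argument: the paper derives the corollary by noting that $h$ segment obstacles contribute $s=O(h)$ sides (the text's ``$s=2n$'' is evidently a typo for $s=2h$), plugging this into Theorem~\ref{enumeration}(ii), and observing that a graph requiring more obstacles must exist as long as $s\log s=o\bigl(\binom{n}{2}\bigr)$. Your write-up just makes the threshold computation $h=c\,n^2/\log n$, $\log h=O(\log n)$ explicit, which the paper leaves implicit.
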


In general, as long as the sum of the sides of the obstacles, $s$,
satisfies $s\log s=o({n\choose 2})$, we can argue that there is a
graph that cannot be represented with such obstacles.

\section{Proof of Theorem \ref{concave}}\label{sec:concave}

Before turning to the proof, we need a simple property of obstacle representations.

\begin{lemma*}\label{lemma} 
Let $k>0$ be an integer and let $G$ be a graph with $n$ vertices that has an obstacle representation with fewer than $\frac{n}{2k}$ obstacles. Then $G$ has at least $\lfloor\frac{n}{2k}\rfloor$ vertex disjoint induced subgraphs of $k$ vertices with obstacle number at most one.
\end{lemma*}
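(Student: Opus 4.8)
The plan is to fix an obstacle representation of $G$ with obstacles $O_1,\dots,O_h$, where $h<\frac{n}{2k}$, and to extract the required induced subgraphs as \emph{localized} pieces of this single representation. The key observation is a monotonicity property: for any subset $S$ of the vertices, if we keep the points representing $S$ together with only those obstacles that actually block some pair of points of $S$, we obtain a valid obstacle representation of the induced subgraph $G[S]$ -- deleting an obstacle that blocks no pair inside $S$ changes none of the visibilities among the points of $S$. Consequently ${\rm obs}(G[S])$ is at most the number of obstacles that are \emph{active} on $S$, i.e.\ that block at least one pair of points of $S$. It therefore suffices to produce $\lfloor\frac{n}{2k}\rfloor$ pairwise vertex-disjoint $k$-subsets on each of which at most one obstacle is active.

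To control activity I would use a convenient geometric sufficient condition: if all points of a set $S$ lie inside a convex region $R$ that meets at most one obstacle, then every segment spanned by two points of $S$ stays inside $R$ and hence can be crossed by at most that one obstacle, so at most one obstacle is active on $S$ and ${\rm obs}(G[S])\le 1$. The plan is then to subdivide the plane into convex cells, each meeting at most one obstacle, to distribute the $n$ points among these cells, and inside each cell to cut its points into groups of exactly $k$, discarding the fewer than $k$ leftover points per cell. Every group produced this way is a \emph{good} group, and the number of good groups is at least $\sum_{c}\lfloor m_c/k\rfloor\ge \frac{n}{k}-C$, where $m_c$ is the number of points in cell $c$ and $C$ is the number of cells. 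If $C$ can be kept at most about $h$, then since $h<\frac{n}{2k}$ this is at least $\lfloor\frac{n}{2k}\rfloor$, as required.

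The main obstacle is precisely the construction of such a convex subdivision with few cells. A one-dimensional approach -- sorting the points by $x$-coordinate and cutting the plane into vertical slabs -- is tempting but fails: a single obstacle that is long in the $x$-direction can be active on many consecutive slabs, so almost every group could meet two obstacles. What is needed is a genuinely two-dimensional partition adapted to the obstacles; for instance, pairwise non-crossing convex obstacles can be separated into convex cells directly, just as horizontal `needle' obstacles are handled by horizontal strips. The delicate point, where I expect the real work to lie, is to bound the number of cells -- equivalently, the number of \emph{bad} groups meeting two or more obstacles -- by essentially $h$ rather than by a larger function such as $O(h\log h)$ or $O(h^2)$, so that the factor in the statement is matched; handling non-convex obstacles, whose convex hulls may overlap, is the part that requires the most care.
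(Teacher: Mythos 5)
Your two preparatory observations are correct: discarding every obstacle that blocks no pair inside $S$ does yield a valid representation of $G[S]$, and the count $\sum_c\lfloor m_c/k\rfloor\ge n/k-C$ is fine. But the proof is genuinely incomplete at exactly the step you defer to ``the real work'': constructing a convex subdivision with $C=O(h)$ cells, each \emph{meeting} at most one obstacle. Worse, under that sufficient condition the plan cannot be completed at all, because the number of cells cannot be bounded by any function of $h$ alone. Take just $h=2$ obstacles shaped like two interlocking combs (connected, disjoint, non-convex), with $m$ interleaved teeth, and place the $n$ points in the gaps between the teeth: any convex set containing points from two non-adjacent gaps crosses teeth of both combs, so any admissible subdivision needs $\Omega(m)$ cells, with $m$ arbitrary. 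So the requirement ``meets at most one obstacle'' is fatally stronger than what is needed, and it is also what misled you into rejecting the vertical-slab partition.

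The idea you are missing is a weaker goodness criterion: a group $G_i$ of $k$ points already satisfies ${\rm obs}(G_i)\le 1$ whenever the convex hull $H$ of its points does not \emph{entirely contain} any obstacle. Indeed, each obstacle that intersects $H$ but is not contained in it reaches the open exterior of $H$, and that exterior is connected; hence the union of the open exterior of $H$ with all such obstacles is a single connected set. This set avoids the points and all edges of $G_i$ (segments between points of $G_i$ lie in the closed hull $H$, and hull edges lie on its boundary, not in the open exterior), while it still blocks every non-edge of $G_i$, since each original blocking obstacle is a subset of it. So one obstacle suffices. With this criterion the slab partition you dismissed works immediately: sort the points by $x$-coordinate and cut them into $\lfloor n/k\rfloor$ groups of $k$ (discarding the remainder). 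The hulls of these groups are pairwise disjoint, so each of the fewer than $\frac{n}{2k}$ obstacles can be entirely contained in at most one hull; in particular, a long obstacle crossing many slabs spoils \emph{no} group, rather than many. Hence more than $\lfloor\frac{n}{k}\rfloor-\frac{n}{2k}$ groups are good, which (being an integer count) is at least $\lfloor\frac{n}{2k}\rfloor$, proving the lemma. This is precisely the paper's proof.
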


\begin{proof}
Fix an obstacle representation of $G$ with fewer than $\frac{n}{2k}$
obstacles. Suppose without loss of generality that in this representation no two vertices have the same $x$-coordinate. Using vertical lines, divide the vertices of $G$ into $\lfloor\frac{n}{k}\rfloor$ groups of size $k$ and possibly a last group that contains fewer than $k$ vertices. Let $G_1, G_2, \ldots$ denote the subgraphs of $G$ induced by these groups. Notice that if the convex hull of the vertices of $G_i$ does not entirely contain an obstacle, then ${\rm obs}(G_i)\le 1$. Therefore, the number of subgraphs $G_i$ that have $k$ points and obstacle number at most one is larger than $\lfloor\frac{n}{k}\rfloor - \frac{n}{2k}$, and the lemma is true.
\end{proof}

We prove Theorem \ref{concave} by a probabilistic argument.

\begin{proof}[Proof of Theorem \ref{concave}]
Let $G$ be a random graph on $n$ labeled vertices, whose edges are chosen independently with probability 1/2. Let $k$ be a positive integer to be specified later. According to Lemma~\ref{lemma},
$${\mbox{\rm Prob}}[{\rm obs}(G)<n/(2k)]$$ 
can be estimated from above by the probability that $G$ has at least
$\lfloor\frac{n}{2k}\rfloor$ vertex disjoint induced subgraphs of $k$ vertices such that each of them has obstacle number at most one. Let $p(n,k)$ denote the probability that $G$ satisfies this latter condition.

Suppose that $G$ has $\lfloor n/(2k)\rfloor$ vertex disjoint induced subgraphs $G_1, G_2,\ldots$ with $|V(G_i)|=k$ and ${\rm obs}(G_i)\le 1$. The vertices of $G_1, G_2,\ldots$ can be chosen in
at most ${n\choose k}^{\lfloor n/(2k)\rfloor}$ different ways. It follows from Theorem~\ref{enumeration}(i) that the probability that  a fixed $k$-tuple of vertices in $G$ induces a subgraph with obstacle number at most one is at most 
$$2^{O(k\log^2k)-{k\choose 2}}.$$
For disjoint $k$-tuples of vertices, the events that the obstacle numbers of their induced subgraphs do not exceed one are independent.

Therefore, we have
$$p(n,k)\le  {n\choose k}^{\lfloor n/(2k)\rfloor}\cdot
(2^{O(k\log^2k)-{k\choose 2}})^{\lfloor n/(2k)\rfloor}\le
2^{n\log n - nk/4 + O(n\log^2k)}.$$ 
Setting $k=\lfloor 5\log n\rfloor)$, the right-hand side of the last inequality tends to zero. In this case, almost all graphs on $n$ vertices have obstacle number at least $\frac{n}{2k}>\frac{n}{10\log n}$, which completes the proof.
\end{proof}

\section{Proof of Theorem \ref{exactly}}\label{sec:exactly}
Alpert, Koch, and Laison \cite{AKL09} asked whether for every natural number $h$ there exists a graph whose obstacle number is exactly $h$. Here we answer this question in the affirmative.

\begin{proof}[Proof of Theorem \ref{exactly}]
Pick a graph $G$ with obstacle number $h'>h$. (The existence of
such a graph was first proved in \cite{AKL09}, but it also follows 
from Theorem \ref{concave}.) Let $n$ denote the number of
vertices of $G$. Consider the complete graph $K_n$ on $V(G)$. Clearly, ${\rm obs}(K_n)=0$, and $G$ can be obtained from $K_n$
by successively deleting edges. Observe that as we delete an edge
from a graph $G'$, its obstacle number cannot increase by more
than {\em one}. Indeed, if we block the deleted edge $e$ by adding a very small obstacle that does not intersect any other edge of $G'$, we obtain a valid obstacle representation of $G'-e$. (Of course, the obstacle number of a graph can also {\em decrease} by the removal of an edge.) At the beginning of the process, $K_n$ has obstacle number {\em zero}, at the end $G$ has obstacle number $h'>h$, and whenever it increases, the increase is {\em one}. We can conclude that at some stage we obtain a graph with obstacle number precisely $h$.
\end{proof}

The same argument applies to the convex obstacle number, to the
segment obstacle number, and many similar parameters.

\subsection*{Acknowledgement.}

We are indebted to Den{\.{i}}z Sar{\i}{\"{o}}z for many valuable
discussions on the subject. A conference version containing some
results from this paper and some from \cite{PS} appeared in
\cite{MPS}.

\bibliographystyle{plain}
\bibliography{obs}
\end{document}